\newcounter{ALC@unique}
\pgfplotsset{compat=1.16}
\providecommand{\md}{}  
\renewcommand{\md}{\theta}  
\newcommand{\obsaug}{Y}
\newcommand{\yb}{\mathbf{y}}
\newcommand{\ub}{\mathbf{u}}
\newcommand{\Npos}{\N_+}
\newcommand{\nc}{n_{\text{c}}}
\newcommand{\cat}[1]{[#1]}
\let\blkdiag\relax\DeclareMathOperator{\blkdiag}{diag}
\newif\ifDraft
\newif\ifArxiv
\title{\LARGE \bf
Data-driven distributionally robust control of partially observable jump linear systems
}
\author{Mathijs~Schuurmans and Panagiotis~Patrinos
  \thanks{M. Schuurmans and P. Patrinos are with the Department 
  of Electrical Engineering (\textsc{esat-stadius}), KU Leuven, 
  Kasteelpark Arenberg 10, 3001 Leuven, Belgium.
  Email: \texttt{\{mathijs.schuurmans, panos.patrinos\}@esat.kuleuven.be}}
  \thanks{This work was supported by: FWO projects: No. G086318N; No. G086518N; 
  Fonds de la Recherche Scientifique -- FNRS, the Fonds Wetenschappelijk Onderzoek--Vlaanderen under EOS Project No. 30468160 (SeLMA), 
  Research Council KU Leuven C1 project No. C14/18/068 and the 
  Ford--KU Leuven Research Alliance project No. KUL-0075.}
}
\begin{document}
\maketitle
\thispagestyle{empty}
\pagestyle{empty}

\begin{abstract}
We study safe, data-driven control of (Markov) jump linear systems with 
unknown transition probabilities, where both the discrete mode and the continuous state are to be inferred from output measurements.
To this end, we develop a receding horizon estimator which uniquely identifies a sub-sequence of past mode transitions and the corresponding continuous state, allowing for arbitrary switching behavior.
Unlike traditional approaches to mode estimation,
we do not require an offline exhaustive search over mode sequences to determine the size of the observation window, but rather select it online. If the system is weakly mode observable,
the window size will be upper bounded, leading to a finite-memory observer.
We integrate the estimation procedure with a simple distributionally robust controller,
which hedges against misestimations of the transition probabilities due to finite sample sizes.
As additional mode transitions are observed, the used ambiguity sets are updated,
resulting in continual improvements of the control performance. 
The practical applicability of the approach is illustrated on small numerical examples.\end{abstract}

\section{Introduction}
\subsection{Background and Motivation}
Switching systems are dynamical systems described by a real-valued state vector and a discrete-valued operating mode.
For each of the operating modes, the system dynamics and output mapping may be different. 
This is a wide class of systems, finding applications in a myriad of 
different fields \cite{fischer_OptimalSequencebasedLQG_2013,schuurmans_LearningBasedRiskAverseModel_2020a,blair_ContinuousTimeRegulationClass_1975}.
We refer to Markov switching systems as the particular subclass of 
such systems in which the discrete mode switches are governed by 
a Markov chain.

Our main objective is to control Markov switching systems, while 
simultaneously learning the switching probabilities between the modes, 
which are initially assumed to be completely unknown. 
A promising approach, which systematically balances controller 
performance and robustness with respect to 
misestimations of the probability distributions, is called  
distributionally robust control.
Due to its attractive theoretical properties,
it has become popular for an increasing number of control tasks \cite{rahimian_DistributionallyRobustOptimization_2019, 
coppens_DatadrivenDistributionallyRobust_2020,xu_DistributionallyRobustMarkov_2010,
schuurmans_LearningBasedDistributionallyRobust_2020a}. 

When designing controllers for this class of systems,
it is often assumed that the discrete mode is directly measurable \cite{schuurmans_LearningBasedDistributionallyRobust_2020a,schuurmans_LearningBasedRiskAverseModel_2020a,
chow_FrameworkTimeconsistentRiskaverse_2014,patrinos_StochasticModelPredictive_2014}. However, in practice, 
the discrete mode typically needs to be inferred from measurements of the continuous state or from output measurements. In this work,
we will restrict our attention to the particular case of \ac{MJLS} \cite{costa_DiscretetimeMarkovJump_2005}, and we will only 
assume output measurements to be available.

Our goal is to recursively estimate the active mode,
in order to continually learn the transition probabilities of the underlying Markov chain,
and integrate this procedure with an online controller design which leverages this information to gradually improve performance. 
By adopting a distributionally robust framework, we can do so while 
retaining system theoretic guarantees, such as stability. We 
will illustrate this by means of a linear controller design, but 
keeping in mind more advanced applications, involving model predictive 
approaches \cite{schuurmans_LearningBasedRiskAverseModel_2020a}.

\subsection{Related work}

In general, the task of estimating the state of \ac{MJLS}
has been studied extensively under various assumptions. 
In the most general setting, where both the discrete mode and the 
continuous state are to be estimated from (noisy) output 
measurements, it is well-known that the optimal observer 
requires exponentially growing memory, as was first shown in 
\cite{ackerson_StateEstimationSwitching_1970}.
To combat this restriction, several approximate estimators 
have been proposed \cite{ackerson_StateEstimationSwitching_1970,tugnait_DetectionestimationSchemeState_1979}, e.g.,
the \ac{IMM} approach \cite{mazor_InteractingMultipleModel_1998}
has been a popular choice in the case where the transition probabilities are known. More recently, in \cite{alriksson_ObserverSynthesisSwitched_2006}, techniques from 
relaxed dynamic programming were used to design an efficient 
receding horizon estimator which discards unlikely mode sequences, while retaining accuracy bounds.
Since most of these approximate methods rely on the knowledge of the transition probabilities to merge or discard certain mode sequences, they are not directly applicable in the present setting, as no prior knowledge of the transition probabilities is available.

Others have proposed to circumvent the mode estimation problem 
and directly estimate the underlying state, e.g., \cite{bako_NewStateObserver_2011}. However, since our end goal involves learning the underlying transition matrix for prediction purposes, continuous state estimates alone do not suffice for our purposes.

A parallel line of study has focused on the possibility of uniquely 
identifying the mode sequence and initial state from noiseless 
output measurements, leading to different notions
of observability in the context of (linear) switching systems \cite{ji_ControllabilityObservabilityDiscretetime_1988}.
In \cite{vidal_ObservabilityIdentifiabilityJump_2002}, simple rank conditions for the observability of autonomous jump linear systems with unknown switching sequence were given.
These results were later extended to systems with a control input 
\cite{elhamifar_RankTestsObservability_2009}. 
Although these conditions are attractive from a computational point of view, since they do not suffer from exponential growth in complexity, they require a minimal dwell-time assumption on the (unknown) mode sequence. This assumption excludes the case where the underlying mode sequence is generated by a Markov chain, especially if there is no prior information on the transition probabilities. 

Around the same time, Babaali and Egerstedt proved complementary results, 
allowing for arbitrary switching behavior \cite{babaali_PathwiseObservabilityControllability_2003,babaali_ObservabilitySwitchedLinear_2004,babaali_AsymptoticObserversDiscreteTime_2005}. It has been 
shown that in these circumstances, it is often not possible 
to uniquely identify the mode sequence at the current time step. This 
consideration has led to more relaxed notions of mode observability \cite{baglietto_ActiveModeObservability_2007,baglietto_ActiveModeObservation_2009,alessandri_RecedinghorizonEstimationSwitching_2005,baglietto_ModeobservabilityDegreeDiscretetime_2014}, which we will 
utilize in this work.

Roughly speaking, a system is said to be mode observable if there 
exists a finite observation horizon $N$, such that the mode sequence 
(and the initial state) can be recovered from $N$ measurements (see \Cref{sec:mode-observability} for a formal definition). 
In prior observer designs, this horizon was assumed to be determined offline. Although the determination of this horizon was shown to 
be decidable in theory \cite{babaali_PathwiseObservabilityControllability_2003}, the computational complexity explodes rapidly with the required horizon $N$, potentially rendering this offline procedure intractable, even for small-scale systems \cite{babaali_PathwiseObservabilityControllability_2003}.

\subsection{Contributions}
Our first and main contribution is the development of a recursive estimator 
for the mode and the continuous state of Markov linear systems.
As we do not assume any prior knowledge on the transition probabilities,
our scheme permits arbitrary switching behavior. This recursive procedure will automatically select the required window size, alleviating the need to determine the mode-observability index offline.
Additionally, we integrate the proposed mode observer with a data-driven 
distributionally robust controller design and illustrate how potential non-uniqueness of the current mode and current state can be circumvented
using an output-feedback approach.
Finally, we demonstrate the practicality of our approach using illustrative numerical experiments.

\subsection{Notation}
Given $a \leq b \in \N$, we define $\natseq{a}{b} \dfn \{
n \in \N \mid a \leq n \leq b \}$. We define the strictly positive natural numbers by $\Npos \dfn \N \setminus \{0\}$. Let $x = (x_t)_{t \in \N}$ denote some (potentially vector-valued) sequence; given indices $k\geq l \in \N$, $\seq{x}{k}{l} = (x_{k}, \dots, x_{l})$ is the subsequence over the time steps $k$ to $l$.
Given a matrix $A$, $A^{\dagger}$ is its (Moore-Penrose) pseudo-inverse. 
Given matrices $A_i$, $i=0, \dots, n$, we denote their 
product (in the order of the index $i$) by $\prod_{i=0}^{n} A_i = A_0 A_1 \dots A_n$. We denote the cardinality of a (finite) set $\Theta$ by $|\Theta|$. Given symmetric matrices $Q, Q'$, we write $Q \succ Q'$ to signify that $Q-Q'$ is positive definite.
Finally, we denote by $\simplex_{\nModes} \dfn \{ p \in \Re^{\nModes} \mid p \geq 0, \trans{\1}p =1 \}$ the $\nModes$-dimensional probability simplex.

\section{Preliminaries and problem statement}
Let $\{\md_t\}_{t \in \N}$ be a time-homogeneous Markov chain, defined 
on some probability space $(\Omega, \F, \prob)$ and taking 
values on the finite set $\W \dfn \{1,\dots, \nModes\}$.
We denote its (unknown) transition matrix $\transmat = (\transmat_{i,j})_{i,j \in \W} \in \simplex_{\nModes}^{\nModes}$, where $\simplex_{\nModes}^{\nModes}$ denotes the set of $\nModes$-by-$\nModes$ stochastic matrices, that is, $P_{i,j} = \prob[\md_{t}=j \mid \md_{t-1}=i]$. 
We will denote the $i$th row of $\transmat$ by $\row{\transmat}{i} \in \simplex_{\nModes}$. We will assume that the Markov chain is 
$(\md_t)_{t\in\N}$ is ergodic, i.e., there exists a value $k \in \N_{>0}$, such that  $\transmat^k > 0$ element-wise, for some $k \geq 1$.
This assumption ensures that every mode of the chain gets visited infinitely often \cite[Ex. 8.7]{billingsley_ProbabilityMeasure_1995}, effectively allowing us to learn the complete transition matrix from a sample trajectory.

Driven by this Markov chain is the \ac{MJLS}
\begin{subequations} \label{eq:system}
    \begin{align}\label{eq:dynamics}
        x_{t+1} &= A_{\md_t} x_t + B_{\md_t}u_t\\
        y_{t} &= C_{\md_t} x_t, \label{eq:measurement}
    \end{align}
\end{subequations}
where $x_t \in \Re^{\ns}$ is the (continuous) state of the system, $u_t \in \Re^{\na}$ is the input and $y_{t} \in \Re^{\ny}$ is the output 
at time step $t$. 
We refer to the value $\md_t$ as the mode at time $t$.

Given the initial state $x_0$, a sequence of previous control 
actions $\seq{u}{0}{t-1} = (u_0, \dots, u_{t-1}) \in \Re^{t \na}$ and the true mode sequence 
$\md^{\star} = (\md_{0}, \dots, \md_t) \in \W^{t+1}$, the sequence 
of measurements $\seq{y}{0}{t} = (y_{0}, \dots, y_{t}) \in \Re^{\ny(t+1)}$ 
satisfies $\seq{ y}{0}{t} = \obsaug(\md^\star, x_0,\seq{ u}{0}{t-1})$, with 
\begin{equation}  \label{eq:measurement-equation}
    \obsaug(\md^{\star}, x_0, \seq{u}{0}{t-1}) \dfn \obs(\md^\star)x_0 + G(\md^\star)\seq{ u}{0}{t-1}, 
\end{equation}
where
\begin{align*}
    \obs(\md)
    {}\dfn{}
    \smallmat{
        C_{\md_0} \\
        C_{\md_1} A_{\md_0} \\
        \vdots \\ 
        C_{\md_t} \prod_{k=1}^{t} A_{\md_{t-k}}
    }
\end{align*}
is the observability matrix and 
\begin{align} \label{eq:hankel}
    G(\md) &\dfn H(\md) \blkdiag(B_{\md_0}, \dots, B_{\md_{t-1}}), \\
    H(\md) &\dfn \smallmat{
        0                                       & 0             & \dots  & 0    \\
    C_{\md_1}                                   & 0             & \dots  & 0    \\
    C_{\md_2}A_{\md_{1}}                        & C_{\md_2}     & \dots  & 0    \\ 
     \vdots                                     & \vdots        & \ddots & \vdots \\
    C_{\md_t}\prod_{k=1}^{t-1}A_{\md_{k-i}}     & C_{\md_t}\prod_{k=1}^{t-2}A_{\md_{k-i}}         &  \dots   & C_{\md_{t}} 
    },
\end{align}
describe the effects of the controls.
We will now review several standard notions from the literature regarding 
observability of the mode sequence, and adapt them to our purposes where 
needed.

\subsection{Mode observability} \label{sec:mode-observability}
The task of recovering the mode sequence (or \emph{path}) from a sequence of measurements rests on our ability
to distinguish two mode sequences from one another,
which leads to the following concept, originally introduced by \cite{babaali_ObservabilitySwitchedLinear_2004} as \emph{controlled discernibility}.
\begin{definition}[Discernibility] \label{def:discernible}
    We say that a pair of mode sequences $\md, \md' \in \W^{N}$ 
    of length $N$ is discernible with respect to a control sequence $u \in \Re^{(N-1)\na}$ if, 
    \begin{equation} \label{eq:discernible-condition}
        \obsaug(\md, x, u) \neq \obsaug(\md', x', u), \quad \text{ for all }x, x' \in \Re^{\ns}.
    \end{equation}
    Otherwise, we say that the pair is indiscernible with respect to 
    $u$.
\end{definition}
In words, two paths are discernible if there is no pair of initial states for which the two paths would yield the same output measurements.

\begin{remark}
In the context of autonomous systems, i.e., whenever $B_i = 0$ for all $i \in \W$, 
a weaker notion of discernibility is typically considered, 
where a condition akin to \eqref{eq:discernible-condition} is imposed for \textit{almost 
every} $x, x'$ \cite{babaali_ObservabilitySwitchedLinear_2004},
or all $x, x'$, such that $x \neq 0$ or $x'\neq 0$
\cite{alessandri_RecedinghorizonEstimationSwitching_2005}. 
This relaxation is necessary in this case,
since \eqref{eq:discernible-condition} would require the 
column spaces of $\obs(\md)$ and $\obs(\md')$ to be 
fully disjoint,
which is impossible as both trivially contain the zero vector. 
Unfortunately, this weakened discernibility notion is not sufficient to ensure that the state and mode sequence can be recursively determined online. An additional assumption needed for this purpose is known as \emph{backward discernibility}. We refer to \cite{babaali_AsymptoticObserversDiscreteTime_2005,halimi_ModelBasedModesDetection_2015} for more details as we will 
not consider this case here.
\end{remark}

We can now introduce the following notions of mode observability
(\acs{MO}\acused{MO}) for 
system \eqref{eq:system}, following roughly the terminology 
of \cite{baglietto_ActiveModeObservability_2007,alessandri_RecedinghorizonEstimationSwitching_2005}.
\begin{definition} \label{def:alpha-omega-MO}
    Consider $N \in \Npos$ and $\alpha, \omega \in \N$, with 
    $\alpha + \omega < N$. We say that 
    system \eqref{eq:system} is $(N,\alpha,\omega)$-\ac{MO} 
    if for all paths $\md, \md' \in \W^{N+1}$, with 
    $\seq{\md}{\alpha}{\omega} \neq\seq{ \md}{\alpha}{\omega}$, 
    $\md$ and $\md'$ are discernible with respect to 
    \emph{almost every} control sequence $u \in \Re^{\na(N-1)}$, 
    that is, all $u \in \Ufeas_{N}^{\alpha,\omega} \dfn \Re^{\na (N-1)} \setminus \mathcal{K}^{\alpha,\omega}_N$, with $\mathcal{K}_{N}^{\alpha,\omega}$ a set of Lebesgue measure 0.
\end{definition}
We refer to $\Ufeas_N \dfn \bigcup_{\alpha, \omega} \Ufeas_{N}^{\alpha,\omega}$ as the set of \emph{discerning} $N$-step control sequences.
This notion can be characterized as follows \cite{baglietto_ActiveModeObservability_2007,babaali_ObservabilitySwitchedLinear_2004}.
\begin{lem}[Controlled discernibility \cite{babaali_ObservabilitySwitchedLinear_2004}]
\label{lem:controlled-discernibility}
    Two paths $\md, \md' \in \W^{N+1}$ are discernible with respect to almost every control sequence if  
    \begin{equation} \label{eq:discernibility}
        (I - P)(G(\md)-G(\md')) \neq 0,
    \end{equation}
    where $P$ denotes the orthogonal projection onto the column space of $\regmatrix{\obs(\md) & \obs(\md')}$.
\end{lem}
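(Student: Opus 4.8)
The plan is to first reduce discernibility with respect to a \emph{fixed} control sequence $u$ to a linear-algebraic feasibility condition, and then pass from "a fixed $u$" to "almost every $u$" by a measure-zero argument.

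I would begin by unfolding \Cref{def:discernible} through \eqref{eq:measurement-equation}: the pair $(\md,\md')$ is \emph{indiscernible} with respect to $u$ exactly when there exist $x, x' \in \Re^{\ns}$ with $\obs(\md)x + G(\md)u = \obs(\md')x' + G(\md')u$, which rearranges to
\[
    \obs(\md)x - \obs(\md')x' = -\bigl(G(\md)-G(\md')\bigr)u.
\]
The next step is to note that, as $(x,x')$ ranges over $\Re^{\ns}\times\Re^{\ns}$, the left-hand side traces out precisely the column space of $\regmatrix{\obs(\md) & \obs(\md')}$; replacing $x'$ by $-x'$ shows the attainable values coincide with this range, since negating a block of columns leaves the column space (and hence the projector $P$) unchanged. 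Consequently, indiscernibility with respect to $u$ holds if and only if $(G(\md)-G(\md'))u$ lies in that column space, i.e.\ if and only if $(I-P)(G(\md)-G(\md'))u = 0$. Taking the contrapositive, $(\md,\md')$ is discernible with respect to $u$ iff $(I-P)(G(\md)-G(\md'))u \neq 0$.

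Finally, I would set $W \dfn (I-P)(G(\md)-G(\md'))$ and invoke the hypothesis $W \neq 0$. The set of control sequences for which discernibility \emph{fails} is then exactly the null space $\{\,u : Wu = 0\,\} = \ker W$. Since $W \neq 0$ has rank at least one, $\ker W$ is a \emph{proper} linear subspace of the (finite-dimensional) control space, and every proper linear subspace of a Euclidean space has Lebesgue measure zero. Hence discernibility holds for all $u$ outside a null set---that is, for almost every control sequence---which is the claim.

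The argument is largely bookkeeping, and I do not anticipate a genuine obstacle. The one step that warrants care is the identification of the attainable left-hand-side values with the column space of the stacked observability matrix, together with the observation that this column space---and therefore $P$---is insensitive to the sign flip on $\obs(\md')$; once this is in place, both the feasibility characterization and the measure-zero conclusion are immediate.
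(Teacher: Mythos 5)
Your proof is correct, but there is no internal proof to compare it against: the paper states \Cref{lem:controlled-discernibility} purely as an imported result, citing \cite{babaali_ObservabilitySwitchedLinear_2004}, and its appendix only proves \Cref{lem:incremental-update} and \Cref{prop:extension-of-MO-index}. Your argument is the standard one underlying the cited result, and the two points that actually need care are both handled properly: the identification of the attainable set $\{\obs(\md)x - \obs(\md')x' \mid x, x' \in \Re^{\ns}\}$ with the column space of $\regmatrix{\obs(\md) & \obs(\md')}$ (legitimate because $x' \mapsto -x'$ is a bijection of $\Re^{\ns}$, so the sign flip changes neither the subspace nor the projector $P$), and the observation that $\ker\bigl((I-P)(G(\md)-G(\md'))\bigr)$ is a proper linear subspace of the control space, hence Lebesgue-null, precisely when that matrix is nonzero. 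One bonus worth noting: since your fixed-$u$ reduction is an equivalence, you actually obtain more than the lemma claims---if $(I-P)(G(\md)-G(\md')) = 0$, the pair is indiscernible with respect to \emph{every} $u$, so condition \eqref{eq:discernibility} is necessary as well as sufficient for discernibility with respect to almost every (indeed, any single) control sequence.
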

We call any system satisfying the above \emph{weakly} \ac{MO}. 
\begin{definition}[Weak mode observability]
    We say that system \eqref{eq:system}
    is weakly \ac{MO} at index $N$, if there exists a pair $(\alpha, \omega)$, 
    such that it is $(N,\alpha, \omega)$-\ac{MO}.
    We call the lowest such $N$ the index of observability. 
\end{definition}
\begin{assumption} \label{assum:MO}
    System \eqref{eq:system} is weakly \ac{MO} at some (unknown) index 
    $N$.
\end{assumption}
One can argue that weak \ac{MO} is the minimal requirement
to guarantee a priori that any mode observation scheme will produce a meaningful result. If it does not hold, then there is no guarantee that for any initial state, even a single mode can ever be uniquely determined, regardless of the number of output measurements.

In practice, verification of (weak) mode observability may require a rapidly growing number of verifications of \eqref{eq:discernibility}, rendering even offline computation of the observability index $N$ intractable. 
For this reason, we propose a scheme which does not require the 
observation horizon to be selected beforehand.
Rather, it is automatically tuned to the smallest required value. 
We will see that this typically yields a smaller observation window than 
required by mode observability (see \Cref{sec:examples}). 

\section{Online mode estimation}
In this section, we describe a mode estimation procedure
which stores a window of $N_t$ past measurements, where $N_t$
is adaptively selected online.
To ease notation, we will denote by $\yb_t \dfn\seq{ y}{t-N_t}{t}$ the sequence of measurements collected over the observation window at time $t$. Similarly, we will denote by $\ub_t \dfn\seq{ u}{t-1-N_t}{t-1}$ the known control sequence over the same window.
At every time instance, the estimation procedure, summarized in \Cref{alg:mode_estimation}, consists of two main steps, discussed below:
\begin{inlinelist}
    \item update the set of \emph{consistent mode sequences} (\Cref{sec:step1-consistent-mode-sequences}); and
    \item update the horizon length (\Cref{sec:horizon-length-selection}).
\end{inlinelist}

In the remainder of this section, we will make the following assumption.
\begin{assumption}\label{assum:discernible}
the applied control sequences $\ub_t$ are discerning for all $N_t \geq N$,
where $N$ denotes the index of mode-observability of the system.
\end{assumption}
\begin{remark} \label{rem:noise}
This assumption can be enforced by imposing it explicitly as a condition during controller synthesis \cite{baglietto_ActiveModeObservability_2007}. Unfortunately, this leads to a nonconvex constraint, which is highly undesirable in most design procedures for linear systems. However, since by assumption on the underlying system, the set of non-discerning control sequences is null, it suffices in practice to 
add a very small random number (e.g., $e_u \sim \mathcal{N}(0, \epsilon I_{\na})$, with $\epsilon>0$ close to zero) to the control action and thus apply $\tilde u_t = u_t + e_u$ to the system instead. In doing so, the sequence $\ub_t$ will be discerning with probability one. 
\end{remark}
\subsection{Consistent mode sequences} \label{sec:step1-consistent-mode-sequences}
At every time instance $t$,
the procedure keeps track of the set of modes sequences of length $N_t$, defined as follows.
\begin{definition}[Consistent mode sequences]
    For a given measurement sequence $y \in \Re^{(N+1)\ny}$, 
    and control sequence $u \in \Re^{N \na}$, we denote by 
    \begin{equation*} \label{eq:definition-consistency}
        \Theta(y, u) \dfn \{ \md \in \W^{N+1} \mid \exists x \in \Re^{\ns}: y = \obsaug(\md, x, u) \}
    \end{equation*}
    the set of mode sequences consistent with $y$ and $u$.
\end{definition}
\begin{remark} \label{rem:indiscernible}
    Note that by definition, any pair of elements $\md, \md' \in \Theta(y,u)$ is indiscernible with respect to $u$. 
\end{remark}
\begin{remark}
In the following, we will always index a mode sequence $\md \in \Theta(\yb_t, \ub_t)$ relative to the start of the prediction window,
i.e., $\md = (\md_0, \dots, \md_{N_t})$, 
and not relative to the absolute time index $t$,
as in $(\md_{t- N_t}, \md_{t- N_t+1}, \dots, \md_{t})$. 
This should cause no confusion as the final element in the sequence, $\md_{N_t}$, will always correspond to the absolute time $t$. 
\end{remark}
Given a mode sequence $\md$,
one can verify its consistency by solving a single least-squares problem.
More specifically, $\md \in \Theta(y,u)$ if and only if 
\begin{equation} \label{eq:check-consistent} 
    (I-\obs(\md)\obs(\md)^\dagger)\tilde y(\md) = 0,
\end{equation}
with $\tilde{y}(\md) \dfn y - G(\md) u$.
In a naive implementation, constructing the set $\Theta(\yb_t, \ub_t)$
would require $O(\nModes^{N_t+1})$ verifications of \eqref{eq:check-consistent}.
However, as the next result shows, only the extensions 
of mode sequences that are in $\Theta(\yb_{t-1}, \ub_{t-1})$ need to 
be considered. 
This reduces the computational burden to $O(|\Theta(\yb_{t-1}, \ub_{t-1})| \nModes)$, which could potentially entail quite a considerable reduction.

\begin{lem} \label{lem:incremental-update}
Let $\md' \dfn \cat{\md\bar{\md}} \in \W^{N+1}$ denote
the concatenation of a path $\md \in \W^{N}$ with a single mode $\bar{\md} \in \W$, for some $N \in \Npos$.
If $\md' \in \Theta(\seq{y}{1}{N+1},\seq{u}{1}{N})$, then there must exist a mode $\underline{\md} \in \W$
such that $\cat{\underline{\md} \md} \in \Theta(\seq{y}{0}{N},\seq{ u}{0}{N-1})$.
Similarly, if $\cat{\underline \md \md \bar{\md}} \in \Theta(\seq{y}{0}{N+1},\seq{ u}{0}{N})$, then $\cat{\underline \md \md} \in \Theta(\seq{y}{0}{N},\seq{ u}{0}{N-1})$, 
\end{lem}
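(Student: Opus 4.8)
The plan is to translate the algebraic consistency condition \eqref{eq:check-consistent} into a dynamical one and then exploit the block-triangular (shift) structure of $\obs$ and $G$. First I would record the elementary but central observation that $\md \in \Theta(y,u)$ is equivalent to the existence of a state trajectory $(x_0,\dots,x_k)$ obeying \eqref{eq:dynamics}--\eqref{eq:measurement} with modes $\md$ and inputs $u$ and producing exactly the outputs $y$: indeed, the $j$-th block row of $\obsaug(\md,x,u)=\obs(\md)x+G(\md)u$ evaluates to $C_{\md_j}x_j$, where $(x_j)$ is the trajectory generated by $(\md,x,u)$. From this viewpoint, consistency simply means ``there is an admissible state path reproducing the window,'' which turns both claims into statements about splicing such paths.

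The second (easier) claim is a pure truncation argument. If $\cat{\underline\md\md\bar\md}\in\Theta(\seq{y}{0}{N+1},\seq{u}{0}{N})$, I would take the witnessing trajectory $(x_0,\dots,x_{N+1})$; deleting the final state/output pair leaves $(x_0,\dots,x_N)$, which obeys the same dynamics under the truncated mode and input sequences and reproduces $\seq{y}{0}{N}$, so the same initial state $x_0$ certifies $\cat{\underline\md\md}\in\Theta(\seq{y}{0}{N},\seq{u}{0}{N-1})$. Equivalently, $\obs(\cat{\underline\md\md})$ and $G(\cat{\underline\md\md})$ are the leading block rows of $\obs(\cat{\underline\md\md\bar\md})$ and $G(\cat{\underline\md\md\bar\md})$, so the reduced system \eqref{eq:check-consistent} is a subsystem of the original and is solved by the same $x$.

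For the first claim I would again begin by truncating: $\cat{\md\bar\md}\in\Theta(\seq{y}{1}{N+1},\seq{u}{1}{N})$ yields, as above, a state $x_1$ from which the path $\md$ reproduces $\seq{y}{1}{N}$; denote by $S_1$ the (nonempty) affine set of all such time-$1$ states. The task then reduces to prepending a single step: find a mode $\underline\md$ and a state $\eta$ with $C_{\underline\md}\eta=y_0$ and $A_{\underline\md}\eta+B_{\underline\md}u_0\in S_1$, after which reusing the $\md$-trajectory on $\seq{y}{1}{N}$ closes the argument. The ingredient I would use to make this feasible is that the window is a genuine trajectory of \eqref{eq:system}: there exist a true mode $\underline\md=\md^{\star}_{0}$ and a true state at the preceding time step reproducing $y_0$ and feeding into the true time-$1$ state, which I would try to use to instantiate the predecessor and certify membership in $S_1$.

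The hard part, and the step I would spend the most care on, is precisely this backward extension: matching the earliest output $y_0$ and simultaneously reaching a state in $S_1$ are two affine conditions on $(\underline\md,\eta)$ that are \emph{not} jointly feasible for arbitrary data -- they fail, for instance, in degenerate autonomous configurations, where no input is discerning and the true time-$1$ state need not lie in $S_1$. This is exactly where \Cref{assum:discernible} must enter: with discerning inputs the consistent sets are tightly constrained (any two consistent paths are indiscernible, \Cref{rem:indiscernible}, and coincide on the identifiable sub-window), which rules out the rank-deficient pathologies and forces the predecessor supplied by the true trajectory to reach $S_1$. I would therefore structure the proof so that \Cref{assum:discernible}, rather than genericity of the system matrices, is the hypothesis doing the work, and verify feasibility by checking that $\seq{y}{0}{N}-G(\cat{\underline\md\md})\seq{u}{0}{N-1}$ lies in the range of $\obs(\cat{\underline\md\md})$ for $\underline\md=\md^{\star}_{0}$, using the true state as an explicit certificate.
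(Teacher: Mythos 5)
Your handling of the second claim (pure truncation of the witnessing trajectory) is correct and is exactly the paper's argument. For the first claim you also follow the paper's route: certify $\cat{\underline{\md}\md} \in \Theta(\seq{y}{0}{N},\seq{u}{0}{N-1})$ by taking $\underline{\md} = \md^{\star}_{0}$ with the true state $x_0$ as witness, which works if and only if the true time-$1$ state $x_1 = A_{\md^{\star}_{0}}x_0 + B_{\md^{\star}_{0}}u_0$ lies in your set $S_1$, i.e., if and only if the true state is itself a witness for the (possibly false) path $\md$. You have located the crux exactly, but you never discharge it: the proposal ends by asserting that \Cref{assum:discernible} ``forces'' $x_1 \in S_1$, with no argument, and your closing suggestion to ``verify feasibility \dots using the true state as an explicit certificate'' is circular -- range membership of $\seq{y}{0}{N}-G(\cat{\underline{\md}\md})\seq{u}{0}{N-1}$ certified by $x_0$ unpacks precisely to the unproven statement $x_1 \in S_1$. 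Nor does the assertion follow from what discernibility gives you: indiscernibility of consistent paths (\Cref{rem:indiscernible}) constrains which \emph{paths} can be simultaneously consistent (they agree on a middle sub-window), but says nothing about which \emph{states} witness a given consistent path; $\md$ may still differ from $\seq{\md^{\star}}{1}{N}$ in its leading and trailing entries, and then nothing forces the true state into $S_1$. So the central step of the first claim is missing.

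For comparison, the paper's own proof takes the identical route and dispatches exactly this step by declaring ``by construction, $\seq{y}{1}{N+1} = \obsaug(\md', x_1, \seq{u}{1}{N})$'', i.e., by silently identifying the existential witness in the definition of $\Theta(\seq{y}{1}{N+1},\seq{u}{1}{N})$ with the true state $x_1$; no use of \Cref{assum:MO} or \Cref{assum:discernible} is made anywhere in it. That this identification is not free can be seen from the following autonomous example (outside \Cref{assum:discernible}, which cannot hold for autonomous systems): $A_1 = \smallmat{0 & 1\\ 1 & 0}$, $A_2 = I$, $C_1 = \smallmat{1 & 0}$, $C_2 = \smallmat{0 & 1}$, $B_1 = B_2 = 0$, true path $(1,1,1)$ from $x_0 = (p,q)$ with $p \neq q$, giving $(y_0,y_1,y_2) = (p,q,p)$. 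The path $(2,1)$ is consistent with $(y_1,y_2)$ (witness $(p,q)$), yet neither $(1,2)$ nor $(2,2)$ is consistent with $(y_0,y_1)$; the first claim itself fails. Hence any correct proof must genuinely invoke the standing assumptions on the system, which neither your sketch nor the paper's proof actually does. In short: you take essentially the same approach as the paper, and your diagnosis of where the difficulty sits is sharper than the paper's treatment of it, but as written your proposal does not close that gap -- and closing it requires a real argument connecting discernibility of the inputs to the witness sets, not just the remark that discerning inputs exist.
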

\begin{proof}
    It follows directly from the definition of $\Theta(\argdot)$. See the \Cref{proof:lem:incremental-update} for details.
\end{proof}

\subsection{Window size selection} \label{sec:horizon-length-selection}
Once the set of consistent mode sequences has been computed,
the next step is to determine whether it is required to increase the window size $N_t$.
To this end, let $\nc \in \Npos$ be a user-specified positive number and define the set  
\begin{equation} \label{eq:indices}
    \begin{aligned}
      I_{t}^{\nc} \dfn 
      \left\{
        k \in \natseq{0}{\bar{N}_t} 
        {}\sep{}
        \begin{matrix}
            \theta_{k+i} = \theta'_{k+i}, \forall \theta, \theta' \in \Theta(\yb_t, \ub_t),\\
            \forall i \in \natseq{0}{\nc-1}
        \end{matrix}
    \right \},
    \end{aligned}
\end{equation}
with $\bar{N}_t \dfn \max\{N_t-\nc, 0\}$,
containing the indices at which all consistent mode sequences are identical for $\nc$ consecutive time steps. If $I_t^{\nc} = \emptyset$, then we set $N_{t+1} = N_{t} + 1$. By default, one would take $\nc=1$. However, in the case where one is interested in observing mode \emph{transitions} rather than simply the modes at potentially non-consecutive time instances (as we are), it is desirable to take $\nc=2$.
This criterion for the selection of $N_t$ is justified by the following observations. 
\begin{lem} \label{prop:extension-of-MO-index}
    If system \eqref{eq:system} is $(N, \alpha, \omega)$-\ac{MO}, then it is also $(N+1, \alpha, \omega)$-MO.
\end{lem}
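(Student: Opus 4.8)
The plan is to show that $(N,\alpha,\omega)$-observability is preserved under extending the horizon by one step. The key conceptual point is that adding a measurement can only \emph{help} discernibility: if two paths were distinguishable over $N+1$ measurements, appending an $(N+2)$-th measurement cannot make them indiscernible, since the longer output sequence contains the shorter one as a prefix. So the main task is to carry the discernibility guarantee for mismatching central blocks $\seq{\md}{\alpha}{\omega}$ from length $N+1$ paths up to length $N+2$ paths, while controlling the null set of non-discerning controls.

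First, I would fix an arbitrary pair of paths $\md, \md' \in \W^{N+2}$ with $\seq{\md}{\alpha}{\omega} \neq \seq{\md'}{\alpha}{\omega}$ (recall the index convention places the central block at the same absolute positions regardless of horizon, so the condition $\alpha + \omega < N < N+1$ keeps the block well inside both windows). Restrict attention to their length-$(N+1)$ truncations $\tilde\md \dfn \seq{\md}{0}{N}$ and $\tilde\md' \dfn \seq{\md'}{0}{N}$. These still disagree on the central block, so by the $(N,\alpha,\omega)$-\ac{MO} hypothesis they are discernible with respect to every control in $\Ufeas_N^{\alpha,\omega} = \Re^{\na(N-1)} \setminus \mathcal{K}_N^{\alpha,\omega}$, a set whose complement has Lebesgue measure zero.

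Next, I would lift discernibility of the truncations to the full paths. The cleanest route is through \Cref{lem:controlled-discernibility}: discernibility at length $N+1$ is equivalent to $(I - P)(G(\tilde\md) - G(\tilde\md')) \neq 0$, where $P$ projects onto the column space of $\regmatrix{\obs(\tilde\md) & \obs(\tilde\md')}$. The observability matrix $\obs(\md)$ and the Hankel matrix $G(\md)$ at horizon $N+2$ are block-lower-triangular extensions of their horizon-$(N+1)$ counterparts, so the earlier rows of $(I-P')(G(\md)-G(\md'))$ at the longer horizon reproduce the shorter-horizon expression up to the projection onto a possibly larger column space. The argument I expect to use is that appending rows enlarges the column space of the stacked observability matrix, but the nonvanishing of the relevant residual is inherited: if a projected difference is nonzero in the top block, it remains detectable at the longer horizon. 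Concretely, the condition \eqref{eq:discernibility} holding for the truncated system implies it holds for the extended one, so the extended paths are discernible for almost every $u$.

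The subtle point — and the step I expect to be the main obstacle — is the bookkeeping of the \emph{null sets}. Discernibility at horizon $N$ is only guaranteed off a measure-zero set $\mathcal{K}_N^{\alpha,\omega} \subset \Re^{\na(N-1)}$, and at horizon $N+1$ we need a null set $\mathcal{K}_{N+1}^{\alpha,\omega} \subset \Re^{\na N}$. I would take the exceptional set at the longer horizon to be $\mathcal{K}_{N+1}^{\alpha,\omega} \dfn (\mathcal{K}_N^{\alpha,\omega} \times \Re^{\na}) \cup \mathcal{K}'$, where the first factor embeds the horizon-$N$ exceptional controls and $\mathcal{K}'$ collects any additional degeneracies introduced by the appended dynamics; one must verify $\mathcal{K}'$ is itself Lebesgue-null, which follows because the extra discernibility condition is a nontrivial polynomial (algebraic) constraint in the control entries whenever the truncations are already discernible, hence its zero set is either everything or measure zero, and the inherited nonvanishing rules out the former. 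A Cartesian product of a null set with $\Re^{\na}$ is null in the product space, so $\mathcal{K}_{N+1}^{\alpha,\omega}$ is null and every $u$ in its complement discerns every qualifying pair, establishing $(N+1,\alpha,\omega)$-\ac{MO}.
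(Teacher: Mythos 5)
There is a genuine gap, and it originates in your parenthetical claim that ``the index convention places the central block at the same absolute positions regardless of horizon.'' That is not the convention of the paper: in \Cref{def:alpha-omega-MO}, $\omega$ is a margin counted from the \emph{end} of the window, so the block that must be identifiable in a length-$(N+1)$ path runs from index $\alpha$ to index $N-\omega$. This is how the definition is used in the proof of \Cref{lem:condition-not-MO} (which writes the agreement condition as $\seq{\md}{\alpha}{N_t-\omega}$) and in the appendix, where $(N,\alpha,\omega)$-MO is characterized via paths decomposed as $\cat{\underline{\lambda}\,\md\,\bar{\lambda}}$ with a \emph{suffix} $\bar{\lambda}\in\W^{\omega}$. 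Consequently, when the horizon grows from $N$ to $N+1$, the block grows with it: for paths in $\W^{N+2}$ it runs from $\alpha$ to $N+1-\omega$. Two paths $\pi,\pi'\in\W^{N+2}$ can therefore satisfy the hypothesis of $(N+1,\alpha,\omega)$-MO while differing \emph{only} at the new last position $N+1-\omega$; their length-$(N+1)$ prefixes $\seq{\pi}{0}{N},\seq{\pi'}{0}{N}$ then agree on the entire old block $\natseq{\alpha}{N-\omega}$, and the $(N,\alpha,\omega)$-MO hypothesis tells you nothing about them. Your truncate-and-inherit step --- which is sound in itself, and is essentially \Cref{lem:inclusion_conflict_1} of the paper rephrased through the rank condition of \Cref{lem:controlled-discernibility} --- only covers the complementary case.

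The paper closes exactly this hole with a second case. When the blocks differ somewhere in positions $\alpha+1,\dots,N+1-\omega$, it drops the \emph{first} measurement rather than the last: the shifted paths $\seq{\pi}{1}{N+1},\seq{\pi'}{1}{N+1}\in\W^{N+1}$ then differ on their own $(N,\alpha,\omega)$-block, hence are discernible for almost every control, and \Cref{lem:inverse-transform-inclusion} gives $c(\pi,\pi',u)\subseteq T_{\pi_0,u_0}^{-1}\,c\bigl(\seq{\pi}{1}{N+1},\seq{\pi'}{1}{N+1},\seq{u}{1}{N}\bigr)$, whose right-hand side is the preimage of the empty set. Since two differing blocks of length $N-\alpha-\omega+2$ must differ in their first or in their last $N-\alpha-\omega+1$ entries (or both), the two cases are exhaustive. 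Note that this second case genuinely uses the dynamics through the affine map $T_{\pi_0,u_0}\colon x\mapsto A_{\pi_0}x+B_{\pi_0}u_0$, so it cannot be absorbed into a purely linear-algebraic ``appending rows preserves the residual'' argument; your proposal has no mechanism to handle it. Two secondary remarks: you invoke \Cref{lem:controlled-discernibility} as an equivalence while the paper states only sufficiency (the converse is true --- if $(I-P)(G(\md)-G(\md'))=0$ then every column of $G(\md)-G(\md')$ lies in the column space of $\regmatrix{\obs(\md) & \obs(\md')}$, making the paths indiscernible for \emph{every} $u$ --- but it needs a line of justification); and your null-set bookkeeping ($\mathcal{K}\times\Re^{\na}$ is null) agrees with the paper's and is fine.
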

\begin{proof}
    See \Cref{proof:prop:extension-of-MO-index}.
\end{proof}

\begin{proposition} \label{lem:condition-not-MO}
    If the system is weakly \ac{MO} at index $N$, then 
    there exists an index $k$, such that for all sufficiently large 
    $t$, 
    \begin{equation} \label{eq:test-MO}
        \md_{k+i} = \md_{k+i}'\quad \forall \md, \md' \in \Theta(\yb_t, \ub_t) 
    \end{equation} 
    for all $i \in \natseq{0}{n_c-1}$.
\end{proposition}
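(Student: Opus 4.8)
The plan is to set $k=\alpha$, where $(\alpha,\omega)$ is a pair for which the system is $(N,\alpha,\omega)$-\ac{MO}, and to show that once the observation window is long enough, mode observability forces every pair of consistent sequences to coincide on the observable block. First I would invoke \Cref{assum:MO} to extract such a pair $(\alpha,\omega)$ with $\alpha+\omega<N$, and then apply \Cref{prop:extension-of-MO-index} inductively to conclude that the system is $(N',\alpha,\omega)$-\ac{MO} for every $N'\ge N$. Hence the observability guarantee is available at every window length $N_t\ge N$, with the observable block occupying the same leading offset $\alpha$ irrespective of how large $N_t$ becomes.

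Next, fix any $t$ with $N_t\ge N$. By \Cref{assum:discernible} the applied control $\ub_t$ is discerning, so it lies in the full-measure set on which the $(N_t,\alpha,\omega)$-\ac{MO} property of \Cref{def:alpha-omega-MO} takes effect: any two paths of length $N_t+1$ whose restrictions to the observable block differ are discernible with respect to $\ub_t$. Taking the contrapositive, any pair that is indiscernible with respect to $\ub_t$ must agree on that block. Since by \Cref{rem:indiscernible} all elements of $\Theta(\yb_t,\ub_t)$ are pairwise indiscernible with respect to $\ub_t$, they share a common value on the whole observable block. This block contains the indices $\alpha,\alpha+1,\dots,N_t-\omega$, which is nonempty because $\alpha+\omega<N\le N_t$; moreover, as soon as $N_t\ge\alpha+\omega+\nc-1$ it contains $\natseq{\alpha}{\alpha+\nc-1}$. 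Choosing $k=\alpha$ then yields exactly \eqref{eq:test-MO}.

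Finally I would tie this to the online window dynamics. Writing $M\dfn\max\{N,\,\alpha+\omega+\nc-1\}$, the two steps above show that whenever $N_t\ge M$ we have $\alpha\in I_t^{\nc}$, so $I_t^{\nc}\neq\emptyset$ and the update rule leaves $N_{t+1}=N_t$. As $(N_t)$ is non-decreasing, it is therefore bounded by $M$ and eventually constant, say $N_t=N_\infty\le M$ for all large $t$; if $N_\infty=M$, the choice $k=\alpha$ settles the claim. I expect the main obstacle to be the residual case in which the realized trajectory makes $I_t^{\nc}$ nonempty \emph{before} the window reaches $M$ (so that $N_\infty<M$, possibly even $N_\infty<N$): there the detected agreement is an artifact of the particular data rather than a consequence of observability, and its location need not equal $\alpha$. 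To retain a single index $k$ one must then argue that this trajectory-specific agreement block sits at an eventually constant position---e.g.\ via a pigeonhole argument over the finite family $I_t^{\nc}\subseteq\natseq{0}{\bar N_\infty}$, or by showing that below the threshold $M$ the window provably keeps increasing, which would collapse everything back to the clean case $N_\infty=M$ and $k=\alpha$. Carefully coupling the measure-zero discernibility guarantee, \Cref{assum:discernible}, and the data-driven stopping rule is the part that demands the most care.
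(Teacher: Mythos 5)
Your proposal follows essentially the same route as the paper's proof: your ``clean case'' is the paper's Case~1 (for $N_t \geq \max\{N,\nc+\alpha+\omega-1\}$, combine \Cref{prop:extension-of-MO-index}, \Cref{assum:discernible} and the pairwise indiscernibility of \Cref{rem:indiscernible} to get agreement on the block $\natseq{\alpha}{N_t-\omega}$, hence $k=\alpha$), and your window-dynamics argument is the paper's Case~2 ($I_t^{\nc}=\emptyset$ forces $N_{t+1}=N_t+1$, which can happen only finitely often since $N_t$ never exceeds the threshold). The one place where you go beyond the paper is the ``residual case'' you flag at the end: the realized data may make $I_t^{\nc}$ nonempty before the window reaches the threshold, so that $N_t$ stabilizes at some $N_\infty$ strictly below $\max\{N,\nc+\alpha+\omega-1\}$. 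The paper's proof dismisses exactly this case with ``either \eqref{eq:test-MO} holds and there is nothing to prove,'' which silently swaps quantifiers: what is actually guaranteed there is only that for each large $t$ there exists \emph{some} index $k_t \in I_t^{\nc}$, not that one fixed $k$ serves all large $t$; and, as you correctly observe, a pigeonhole over the finite set $\natseq{0}{\bar N_\infty}$ upgrades this only to a $k$ valid for infinitely many $t$, not eventually always. So your concern is legitimate, and your attempt is no less complete than the paper's own argument on this point. It is worth noting that the consequences the paper actually extracts from this proposition (the window sizes $N_t$ are bounded, and $I_t^{\nc}\neq\emptyset$ for all sufficiently large $t$, so that infinitely many length-$\nc$ mode words are harvested) require only the per-$t$ existence of an agreement index, which both your argument and the paper's do establish.
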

\begin{proof}
    This is a direct consequence of \Cref{def:alpha-omega-MO} and the fact that two paths $\theta, \theta' \in \Theta(\yb_t, \ub_t)$
    are indiscernible (see \Cref{rem:indiscernible}). We consider 
    two distinct cases. 
    \begin{proofsteps}
        \item \label{case:large-N} Suppose that $N_t = N$, then weak mode-observability and 
    \Cref{assum:discernible} imply that for some $\alpha, \omega$,
    \[ 
        \seq{\md}{\alpha}{N_t-\omega} = \seq{\md}{\alpha}{N_t-\omega}, \; \forall \md, \md' \in \Theta(\yb_t, \ub_t), 
    \]
    since all paths in $\Theta(\yb_t, \ub_t)$ are indiscernible (see \Cref{rem:indiscernible}). By \Cref{prop:extension-of-MO-index}, this 
    can be extended inductively to any $N_t > N$. Hence, if $N_t \geq 
    \max\{N, n_c + \alpha + \omega - 1\}$, then \eqref{eq:test-MO} holds 
    for all $i \in \natseq{0}{\nc - 1}$.
    \item If, alternatively $N_t < N$, then two situation may occur.
    Either \eqref{eq:test-MO} holds and there is nothing to prove,
    or \eqref{eq:test-MO} does not hold. 
    The latter case implies that $I_{t}^{\nc} = \emptyset$ and therefore 
    $N_{t}$ is increased by one.
    \end{proofsteps}
    This situation may occur a finite number 
    of times,
    until eventually $N_{t} = \max\{N, \nc + \alpha + \omega -1\}$ and we arrive back in case \ref{case:large-N}.
\end{proof}
From \Cref{lem:condition-not-MO} it follows that the window sizes $N_t, t \in \N$ generated by \Cref{alg:mode_estimation} are upper bounded, and furthermore, that asymptotically, an infinite number of consecutive mode sequences of length $\nc$ will be observed. These mode sequences can be obtained at every time step from $\Theta_t$ and $I_{t}^{n_c}$.

\begin{algorithm}[t]
    \caption{Mode estimation}\label{alg:mode_estimation}
    \begin{algorithmic}[1]
        \Require initial measurement $y_0$, system \eqref{eq:system}, $\nc>0$
        \State $t\gets0$, $N_t \gets 0$, $\Theta_t \gets \{\md \in \W \mid \exists x: y_0 = C_{\md}x \}$, 
        \Loop
            \State Compute $I_{t}^{\nc}$ using \eqref{eq:indices}
            \If{$|I_{t}^{\nc}| = 0$} \label{step:MO-check}
                \State $N_{t+1} \gets N_t+1$
            \EndIf
            \State $t \gets t + 1$
            \State Measure: $\yb_{t} \gets\seq{y}{t-N_{t}}{t}, \ub_t \gets\seq{ u}{t-N_{t}-1}{t-1}$ 
            \State $\Theta_{t} \gets $ \textproc{update}($\Theta_{t-1}$, $\yb_t, \ub_t$)
        \EndLoop
        \Procedure{update}{$\Theta, \yb_t, \ub_t$}
            \State $\bar{\Theta} \gets \emptyset$
            \ForAll{$\underline{\md} \in \Theta$}
                \ForAll{$\bar \md \in \W$}
                    \State $\md \gets \cat{\seq{\underline{\md}}{t-N_t}{t} \bar{\md}}$ \Comment{Concatenate paths}
                    \If{\eqref{eq:check-consistent} holds for $y=\yb_t$, $u=\ub_t$}
                        \State $\bar \Theta \gets \bar \Theta \cup \theta$
                    \EndIf
                \EndFor
            \EndFor
            \Return $\bar \Theta$
        \EndProcedure
    \end{algorithmic}
\end{algorithm}

\subsection{Recovery of the continuous state} \label{sec:continuous-state}
For control purposes, one may additionally wish to
recover the initial state corresponding to a path in $\Theta(\yb_t, \ub_t)$. For any consistent mode sequence $\md \in \Theta(\yb_t, \ub_t)$, there exists an initial state $x_{t-N_t}$ such that the linear system 
\begin{equation} \label{eq:state_estimation}
    \obs(\md) x_{t-{N_t}} = \tilde{\yb_t}(\md)
\end{equation}
with $\tilde{\yb_t}(\md) = \yb_t - G(\md)\ub_t$ is 
consistent. Of course, the solution of this system is unique 
if and only if $\rank(\obs(\md)) = n$. If this is the 
case for any path $\md$, then it is said that the system is \emph{pathwise observable} \cite{babaali_PathwiseObservabilityControllability_2003}. However, this may require a larger number of measurements than the requirement that $I_{t}^{\nc} \neq \emptyset$.
To account for this, one may simply add the condition that 
$\rank(\obs(\md)) = n$ for all $\theta \in \Theta_t$ in step~\ref{step:MO-check} of \Cref{alg:mode_estimation}. Provided that the system is \emph{pathwise observable} 
at some index $N_{\text{p}}$, then a modified version of \Cref{lem:condition-not-MO} can be shown analogously to establish finiteness 
of the window size $N_t$. However, even under this assumption, it is not 
clear that the solutions of \eqref{eq:state_estimation} for different
paths will always coincide. in \Cref{sec:design}, we 
discuss several alternative approaches for circumventing this indeterminism of the state in the controller design.

\section{Distributionally robust control} \label{sec:control}
We will now illustrate how the proposed mode-observation scheme 
can be utilized for controller design.
Our objective is to construct a stabilizing controller 
for system \eqref{eq:system}, given only the information obtained 
by the observer described above.
To this end, we adopt a distributionally robust approach,
in which the available data is used to construct a 
convex set of distributions which contains the true distribution with 
high probability.

For simplicity, we will restrict our attention here 
to the synthesis of stabilizing linear controllers, essentially 
extending the work in
\cite{schuurmans_SafeLearningBasedControl_2019} to the partially observed case.
Although interesting in its own right,
this problem will prove particularly useful as a tool for the design of stabilizing model predictive control schemes \cite{rawlings_ModelPredictiveControl_2017}. 
For instance, the controller described below could be used in conjunction with the methodology of \cite{bernardini_StabilizingModelPredictive_2012} 
(with minor modifications), extending it to a distributionally robust variant.

\subsection{Data-driven ambiguity sets} \label{sec:ambiguity}

In order to account for misestimations of transition probabilities due to finite sample sizes, we replace empirical point estimates by so-called ambiguity sets.
Conceptually, an ambiguity set is a (data-dependent) set of probability distributions that contains the true data-generating distribution with 
some specified confidence level.
Let $\set{D}_t \dfn \{(\md_k, \md_{k+1})\}_{k=0}^{t}$ denote the set of mode switches observed up to time $t$, using \Cref{alg:mode_estimation}. 
Partitioning this set into sets $\set{D}_{t,i} \dfn \{ \md_{k+1} \mid (\md_{k}, \md_{k+1}) \in \set{D}_t, \md_k = i \}$, $i \in \W$, containing the 
observed mode transitions originating in mode $i$, we obtain \iac{i.i.d.} sample 
from the probability vector $\row{\transmat}{i}$ (i.e., the $i$th row 
of the (unknown) transition matrix $\transmat$).
Let $\beta_t \in [0,1)$, $t \in \N$ denote a given (summable) sequence of desired confidence levels. 
Following the approach of \cite{schuurmans_LearningBasedDistributionallyRobust_2020a},
we define the ambiguity set as 
\begin{equation} \label{eq:ambiguity-set}
    \amb_{t,i} = \amb(\set{D}_{t,i}) \dfn \{ p \in \simplex_{\nModes} \mid \nrm{p - \hat{p}_{t,i}}_{1} \leq r_{t,i}\},
\end{equation}
where 
\(
    \hat{p}_{t,i} = \tfrac{1}{|\set{D}_{t,i}|} \sum_{\md \in \set{D}_{t,i}} \delta(\md)
\) is the empirical distribution over the set $\set{D}_{t,i}$, 
and the radius
\begin{equation} \label{eq:TV-radius}
    r_{t,i} = 
      \sqrt{\tfrac{2(\nModes \log 2 - \log \beta_t)}{|\set{D}_{t,i}| }},
\end{equation}
is chosen using standard concentration inequalities to ensure that \cite[Thm. A.6.6]{vaart_WeakConvergenceEmpirical_2000}
\begin{equation} \label{eq:ambiguity-inclusion}
    \prob[ \row{\transmat}{i} \in \amb(\set{D}_{t,i}) ] \geq 1 - \beta_t,
\end{equation}
for all $i \in \W$. Note that the probability here is taken with respect to the data set $\set{D}_{t,i}$.
We emphasize that the quantities $\hat{p}_{t,i}$ and $r_{t,i}$ can 
easily be updated online, without requiring explicit storage of the
dataset $\set{D}_{t,i}$.

Finally, it is worthwhile to mention that several other classes of ambiguity sets have been proposed in the literature (see, for instance, \cite{rahimian_DistributionallyRobustOptimization_2019} for an overview). For our purposes, however, the $\ell_1$-ambiguity set defined as \eqref{eq:ambiguity-set} is particularly suitable due to its polytopic 
structure, as we will discuss in the next section.

\subsection{Controller design} \label{sec:design}
As the mode at time $t$ is not necessarily uniquely defined,
even under pathwise observability \cite{babaali_PathwiseObservabilityControllability_2003},
the continuous state at time $t$ may not be uniquely recoverable.
Several approaches have been proposed to circumvent this issue.
For instance, in \cite{alessandri_LuenbergerObserversSwitching_2007}, path-dependent Luenberger-type observers are designed,
together with conditions under which the error dynamics are stable for any consistent path at time $t$.
However, these developments were made assuming no stochastic structure in the switching behavior.
By contrast, our goal is to use the constructed ambiguity sets \eqref{eq:ambiguity-set} to incrementally improve the controller using the observed data. 
This without jeopardizing fundamental system-theoretic properties of the closed-loop system.
To this end, we employ a more direct approach based on linear output feedback.

More specifically, 
Our goal is to construct a (time-varying) control law $u_t = K_{t} y_t$ 
such that the closed-loop system
\begin{equation} \label{eq:closed-loop}
    x_{t+1} = (A_{\md_t} + B_{\md_t}K_t C_{\md_t}) x_{t},
\end{equation}
is stable in the \emph{mean-square} sense.
System \eqref{eq:closed-loop} is said to be mean-square stable if $\lim_{t \to \infty} \E[x_t \trans{x_t}] = 0$ \cite{costa_DiscretetimeMarkovJump_2005}. 
A sufficient condition for this is the existence of a mode-dependent matrix $V_i \succ 0$, $i \in \W$ such that the Lyapunov-type condition
\begin{equation} \label{eq:lyapunov}
    \sum_{j \in \W} \transmat_{i,j} \trans{(A_{j}+ B_{j}K_t C_j)} V_{j} (A_{j} + B_{j} K_t C_j) - V_i < 0,
\end{equation}
holds for all $i$, and for all $t \in \N$ \cite{costa_DiscretetimeMarkovJump_2005}. 
The main challenge here is that $K_t$ is multiplied both on the 
left and the right, which makes it impossible to use standard
techniques to reformulate \eqref{eq:lyapunov} as \iac{LMI} \cite{schuurmans_SafeLearningBasedControl_2019,bernardini_StabilizingModelPredictive_2012,kothare_RobustConstrainedModel_1996}.

This issue was addressed in \cite{shu_StaticOutputfeedbackStabilization_2010} for 
the case where the transition matrix is known. 
Exploiting the fact that the ambiguity sets $\amb_{t,i}$ 
are polytopic, i.e., they can be written as 
$\amb_{t,i} = \conv\{q_{t,i}^{l}\}_{l=1}^{n_{\amb_{t,i}}}$, we can straightforwardly extend this methodology to the 
distributionally robust case, leading to the following result. 
\begin{thm}
    Suppose that at every time $t$,
    for sufficiently large $\alpha>0$, and any positive number $c>0$, there exists a solution $\gamma^{\star} < 0$ to
    \begin{subequations} \label{eq:LMI}
        \begin{align}
          \gamma^{\star}{=}&\minimize_{\stackrel{\gamma, V_{1,i}, V_{2,i}, V_4, L}{Q \succ0, H_{j,i}, G_{j,i}}} &&\gamma \\
          &\stt && \smallmat{V_{1,i} &\trans{V_{2,i}}\\ 
                             V_{2,i} & V_{4}} \succ 0\\
          &&& O_{i}(q^{l}_{t,i}, M, \alpha) \prec \gamma I, \label{eq:condition-omega}\\
          &&& \gamma \geq -c, \forall  l \in \natseq{1}{n_{\amb_{t,i}}},
        \end{align}
    \end{subequations}
    for all $j \in \{1,2\}$ and $i \in \W$, with $O_{i}$ as defined in \eqref{eq:control-design-matrix} (see \Cref{sec:LMI}), where 
    $M = (M_{i})_{i \in \W}$ is a given (potentially mode-dependent) stabilizing state-feedback gain with respect to all $\row{\transmat}{i} \in \amb_{t,i}, i \in \W$ and $\{q_{t,i}^{l}\}_{l=1}^{n_{\amb_{t,i}}}$ are the vertices of 
    the polytopic ambiguity set $\amb_{t,i}$.
    Let $K_t = Q^{-1}L$ denote the corresponding output feedback gain.  
    If furthermore, the confidence levels $\beta_t$ are chosen to satisfy
    $\sum_{t=0}^{\infty} \beta_{t} < \infty$, 
    then $u_t = K_t y_t$ is a mean-square stabilizing controller.
\end{thm}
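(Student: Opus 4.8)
The plan is to decompose the argument into a deterministic LMI-to-Lyapunov implication, a probabilistic ``eventual inclusion'' of the true transition matrix in the ambiguity sets, and a stochastic Lyapunov-drift argument linking the two. First I would establish that feasibility of \eqref{eq:LMI} at time $t$ with $\gamma^{\star}<0$ implies that the gain $K_t = Q^{-1}L$ satisfies the coupled Lyapunov inequality \eqref{eq:lyapunov} for \emph{every} probability vector in each ambiguity set, with a recovered certificate $V_i^{(t)}\succ0$. Two ingredients are needed. The reformulation of \cite{shu_StaticOutputfeedbackStabilization_2010}, detailed in \Cref{sec:LMI}, shows that for the fixed stabilizing gain $M$ and sufficiently large $\alpha$ the inequality $O_i(p,M,\alpha)\prec\gamma I\prec 0$ is equivalent to \eqref{eq:lyapunov} evaluated at the probability vector $p$, with $V_i^{(t)}$ reconstructed from the blocks $V_{1,i},V_{2,i},V_4$ and $K_t=Q^{-1}L$; this is precisely the device that circumvents the bilinear obstruction of having $K_t$ on both sides of \eqref{eq:lyapunov}. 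Second, since $O_i(p,M,\alpha)$ is affine in $p$, imposing \eqref{eq:condition-omega} at the vertices $\{q_{t,i}^{l}\}_{l=1}^{n_{\amb_{t,i}}}$ is, by convexity, equivalent to enforcing it on the whole set $\amb_{t,i}=\conv\{q_{t,i}^{l}\}$. Hence, whenever $\row{\transmat}{i}\in\amb_{t,i}$ for all $i\in\W$, inequality \eqref{eq:lyapunov} holds for the \emph{true} transition matrix $\transmat$ with certificate $V^{(t)}$ and gain $K_t$.

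Next I would control how often the true matrix can escape the ambiguity sets. By \eqref{eq:ambiguity-inclusion} and a union bound over the $\nModes$ rows, $\prob[\,\exists i\in\W:\row{\transmat}{i}\notin\amb_{t,i}\,]\leq \nModes\beta_t$, which is summable since $\sum_t\beta_t<\infty$. The Borel--Cantelli lemma then furnishes an almost surely finite random time $T$ such that $\row{\transmat}{i}\in\amb_{t,i}$ for all $i\in\W$ and all $t\geq T$. Combined with the previous step, \eqref{eq:lyapunov} therefore holds for the true chain, with the certificate $V^{(t)}$ and gain $K_t$, at every $t\geq T$, almost surely.

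Finally I would convert this eventual Lyapunov decrease into mean-square convergence. For each $t\geq T$, inequality \eqref{eq:lyapunov} with the true $\transmat$ certifies that the one-step second-moment operator of the closed loop under $K_t$ is a strict contraction in the $V^{(t)}$-weighted norm; writing $W_t\dfn\trans{x_t}V^{(t)}_{\md_t}x_t$, this controls the conditional expectation of the next-step value by $W_t$ minus a positive quadratic term in $x_t$, yielding a supermartingale-type drift in the favorable regime $t\geq T$. Provided the certificates are uniformly conditioned, say $\underline{\lambda}I\preceq V^{(t)}_i\preceq\bar{\lambda}I$ (which follows from the normalization built into \eqref{eq:LMI} through $Q\succ0$, the block positivity constraint, and the bound $\gamma\geq-c$), these per-step contractions compose into a decay of $\E[\|x_t\|^2]$ along the tail $t\geq T$; the finitely many steps before the almost surely finite time $T$ keep the second moment finite and hence do not affect the limit. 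This gives $\lim_{t\to\infty}\E[x_t\trans{x_t}]=0$, i.e.\ mean-square stability.

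I expect this last step to be the main obstacle. The certificates $V_i^{(t)}$ and gains $K_t$ are genuinely time-varying, so the time-invariant \ac{MJLS} theorem of \cite{costa_DiscretetimeMarkovJump_2005} does not apply verbatim: one must reconcile the time-$t$ certificate appearing on both sides of \eqref{eq:lyapunov} with the time-$(t+1)$ certificate that enters $W_{t+1}$, secure the uniform definiteness bounds on $V^{(t)}$, and ensure the decrease does not degenerate as $t\to\infty$ (note $\gamma^{\star}\in[-c,0)$ need not be bounded away from $0$). Finally, the favorable regime begins only at the random instant $T$ rather than a deterministic one, so the pre-$T$ second moment must be shown to remain integrable uniformly in the sample path.
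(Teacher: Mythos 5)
Your first two steps are precisely the paper's proof. The paper argues exactly as you do: feasibility of \eqref{eq:LMI} with $\gamma^{\star}<0$, together with affinity of $O_i(\cdot,M,\alpha)$ in the probability vector, means that imposing \eqref{eq:condition-omega} at the vertices $q_{t,i}^{l}$ enforces it on all of $\amb_{t,i}$, so that \cite[Thm.~1]{shu_StaticOutputfeedbackStabilization_2010} yields \eqref{eq:lyapunov} whenever $\row{\transmat}{i}\in\amb_{t,i}$ for every $i\in\W$; then the union bound, $\prob[E_t]\leq\nModes\beta_t$, summability of $\beta_t$, and Borel--Cantelli give an almost surely finite time $T$ after which the inclusion always holds. (You even state the Borel--Cantelli conclusion correctly---the events $E_t$ occur only finitely often---whereas the paper's text contains a slip, saying ``the event $E_t$ occurs for all $t>T$''.)

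Where you and the paper part ways is your third step, and the comparison is instructive: the paper has no third step. Its proof ends with ``thus the Lyapunov condition \eqref{eq:lyapunov} holds for all $i\in\W$ and all sufficiently large $t$,'' implicitly invoking the sufficient condition stated before \eqref{eq:lyapunov}. But that condition (and the underlying \ac{MJLS} theory of \cite{costa_DiscretetimeMarkovJump_2005}) is formulated for a \emph{single} time-invariant certificate $V_i$ paired with the time-varying gains $K_t$, whereas re-solving \eqref{eq:LMI} at each $t$ produces a different certificate $V_i^{(t)}$ at every step. The obstacles you list---reconciling $V^{(t)}$ on both sides of the drift inequality with $V^{(t+1)}$ entering the next step, securing uniform bounds $\underline{\lambda}I\preceq V_i^{(t)}\preceq\bar{\lambda}I$, the fact that $\gamma^{\star}\in[-c,0)$ is not bounded away from zero (the constraint $\gamma\geq-c$ only prevents unboundedness below, it gives no uniform decrease margin), and the randomness of $T$---are genuine and are resolved nowhere in the paper. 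So your proposal is not missing anything relative to the paper's own argument; rather, it makes explicit, without fully closing, a gap that the paper's one-paragraph proof passes over in silence. To actually close it one would either fix a common certificate across time (e.g., additionally constrain \eqref{eq:LMI} so that the new $V_i^{(t)}$ remains a valid certificate for the previous gains, or freeze $V$ once the ambiguity sets are nested), or invoke a time-varying stochastic Lyapunov result requiring a uniform decrease margin, which the current formulation does not supply.
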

\begin{proof}
    If $\row{\transmat}{i} \in \amb_{t,i}$ for all $i \in \W$, then, 
    since $O_{i}(\argdot, M, \alpha)$ is affine, \cite[Thm. 1]{shu_StaticOutputfeedbackStabilization_2010} implies that at time 
    $t$ \eqref{eq:lyapunov} holds for all $i \in \W$.
    Now let $E_t \dfn \{ \exists i \in \W : \row{\transmat}{i} \notin \amb_{t,i}\}$ denote the event where at least one row of the transition matrix does not lie in the corresponding ambiguity set at time $t$.
    By \eqref{eq:ambiguity-inclusion} and the union bound, we have that $\prob[E_t] \leq \nModes \beta_t$.
    Since $\beta_t$ was assumed to be summable, the Borel-Cantelli lemma \cite[Thm. 4.3]{billingsley_ProbabilityMeasure_1995} implies that 
    with probability 1, there exists a finite time $T$, such that the event $E_t$ occurs for all $t > T$. Thus, the Lyapunov condition \eqref{eq:lyapunov} holds for all $i \in \W$ and for all sufficiently large $t$. 
\end{proof}
The stabilizing state-feedback gain $M$ can be computed
using standard techniques, often yielding a separate \ac{LMI} (e.g., \cite{bernardini_StabilizingModelPredictive_2012,schuurmans_SafeLearningBasedControl_2019}).
For the confidence levels we choose $\beta_{t} = 0.5 (t+1)^{-2}$ in our experiments, as this sequence is summable, but decreases sufficiently slowly to ensure that the radii $r_{t,i}$ will converge to 0.  
Indeed, since by assumption, the Markov chain is ergodic,
\cite[Lem. 6]{wolfer_MinimaxLearningErgodic_2019} essentially states that the number of visits in every mode (i.e., $|\mathcal{D}_{t,i}|$) asymptotically grows linearly with $t$, so that by \eqref{eq:TV-radius}, $\lim_{t \to \infty} r_{t,i} = \tfrac{\log t}{t} = 0$.
As a result, we may expect subsequent refinements of $K_t$ to approach 
the control gain obtained when the transition matrix $\transmat$ is known.

\section{Illustrative examples} \label{sec:examples}
\subsection{Mode estimation}
Consider the system \eqref{eq:system} with 
\(
      A_1 = \smallmat{0.45 & 0\\ 0 & 0.4}, 
      A_2 = \smallmat{0.25 &-0.20 \\ 0.04 & 0.4},
      B_1 = B_2 = \smallmat{0.3\\ 0.4}, 
      C_1 = C_2 = \smallmat{2&1}, 
\)
driven by random input $u_t$, drawn \ac{i.i.d.} from a Gaussian distribution, i.e., $u_t \sim \mathcal{N}(0,I_{\na})$ as described in \cite{ragot_SwitchingTimeEstimation_2003}.
This system is not weakly \ac{MO} at least up to index $N=15$, in the sense of \Cref{lem:controlled-discernibility}. However,
running \Cref{alg:mode_estimation} with $\nc=2$ for 5000 randomly drawn states $x_0 \sim \mathcal{N}(0,I_{\ns})$, and with uniform transition probabilities (i.e., $\transmat_{i,j} = 0.5$, $i,j\in\W$) we find that in all cases, $N_t$ converges to $3$, with $|\Theta_t| \leq 4$ for all $t$. 

Similarly, the system with $A_i$, $B_i$ and $C_i$ as in \cite[sec. V]{bako_NewStateObserver_2011} (omitted here for sake of space) is $(4,1,1)$-MO, and thus weakly MO in the sense of \Cref{lem:controlled-discernibility}. Yet, in practice, 
we find that in almost all cases, \Cref{alg:mode_estimation} with $\nc =2$ converges to $N_t = 3$, with the path uniquely identified, i.e., $|\Theta_t| = 1$ for all $t \geq 1$.
It is clear that manual selection of the observation window size based on 
offline observability computations will -- when feasible -- tend to lead to a strict overestimation of the practically required window length.

\subsection{Distributionally robust control} \label{sec:dr-control-ex}

Consider the (autonomously instable) system \eqref{eq:system} with
\[
\begin{aligned}
  A_1 &= \smallmat{1.05& 1.8\\0 &1.1}, A_2 = \smallmat{0.95 & 0.7\\ 0 & 0.95},\\
  B_1 &= \smallmat{0.9& 0 \\ 0&0}, B_2 = \smallmat{0.8 & 0 \\ 0& 1.4},
  C_1 = \smallmat{1 & 1 \\ 0 & 0}, C_2 = \smallmat{1&0\\0&1},
\end{aligned}
\]
and with uniform transition matrix: $\bar{\transmat}_{i,j} = 0.5$, $i,j \in \W$.

We compare three controllers: 
\begin{inlinelist}
    \item \label{item:rob}A \textbf{robust} controller, assuming no knowledge of the 
    transition probabilities, computed by solving \eqref{eq:LMI}
    with $\amb_{t,i} = \simplex_{2}^2$;
    \item \label{item:stoch} A \textbf{stochastic} controller, which has access to the 
    transition matrix $\bar{\transmat}$, and is obtained by solving the \eqref{eq:LMI} with $\amb_{t,i} = \{\row{\bar{\transmat}}{i}\}$; and
    \item a \textbf{distributionally robust} (DR) controller, using the ambiguity set described in \Cref{sec:ambiguity}. 
\end{inlinelist}
We perform a closed-loop simulation of the three controllers for a
random realization of the mode sequence from a fixed initial state.
Initially, the DR controller behaves indistinguishably from the 
robust controller, as no data is obtained yet and the ambiguity set coincides with the probability simplex. After 50 steps, we apply a sudden additive disturbance to the system.
Note that such a disturbance can be detected in the mode estimation scheme, as at this time, $\Theta_t$ will be empty.
In our implementation, we handle this case by discarding data within the observation window and resetting $N_t$ back to 0.
By this time, the ambiguity set radius has already decreased considerably and as illustrated in \Cref{fig:state_vs_time},
the DR controller shows faster disturbance rejection than the robust approach. 
As more data is observed and the ambiguity sets $\amb_{t,i} \to \{\bar{\transmat_{i}}\}$, the DR controller converges to the stochastic controller. 
\begin{figure}[ht!]
   \centering
   \includegraphics[]{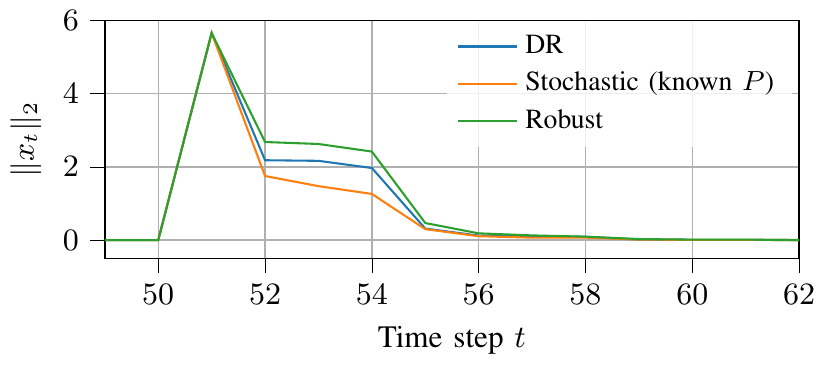}
   \caption{Evolution of the state norm in the example of \Cref{sec:dr-control-ex}.}
   \label{fig:state_vs_time}
\end{figure}

\section{Conclusion}
We have proposed an adaptive mode-observation scheme 
for Markov jump linear systems, which does not require 
explicit determination of the index of observability of the 
system. We have illustrated, based on a simple 
linear output feedback control design, how this mode observer can be used to construct ambiguity sets over the mode transition probabilities, 
allowing to improve control performance over time. 
In future work, we aim to derive more efficient formulations for the computation of stabilizing output feedback gains, akin to the ones proposed in \cite{schuurmans_SafeLearningBasedControl_2019}. Furthermore, we 
plan to integrate the proposed mode observer with 
distributionally robust model predictive control schemes \cite{schuurmans_LearningBasedDistributionallyRobust_2020a}.

\bibliographystyle{ieeetr}
\bibliography{references}
\ifArxiv
\begin{appendix}
\subsection{Technical lemmas and auxiliary results}
Let us define
$$
    \hat{c}(\md, \md')
    {}\dfn{}
    \{x \in \Re^{\ns} \mid \exists x' \in \Re^{\ns} : C_{\md} x = C_{\md'}x' \},
$$
and
$$
\begin{aligned}
    c(\md, \md', u)
    {}&\dfn{}
    \left\{
    x \in \Re^{\ns}
        \sep
    \begin{matrix}
        \obsaug(\md, x, u) = \obsaug(\md', x', u),\\
        x' \in \Re^{\ns}
    \end{matrix}
    \right \}.
\end{aligned}
$$
Clearly, the system is $(N, \alpha, \omega)$-\ac{MO} if and only 
if $c(\md, \md', u) = \emptyset$ for all $\md, \md' \in W^{N+1}$ with  $\seq{\md}{\alpha}{\omega} \neq \seq{\md'}{\alpha}{\omega}$ and 
almost every $u \in \Re^{\na N}$.

The following facts will be useful in proving the results 
from the main text. 
First, notice that 
\begin{equation} \label{eq:subset}
\begin{aligned}
  c(\md, \md', u)
      {}&={}
      \left\{
      x \in \Re^{\ns}
          \sep
      \begin{matrix}
          C_{\md_0}x = C_{\md_0'}x',\; x' \in \Re^{\ns},\\
           \obsaug(\seq{\md}{1}{N}, \bar{x}, \seq{u}{1}{N-1})\\
          {}={}
          \obsaug(\seq{\md}{1}{N}', \bar{x}', \seq{u}{1}{N-1}),\\
          \bar{x}  = A_{\md_0}x,\;
          \bar{x}' = A_{\md_0'} x'
      \end{matrix}
  \right\}\\
  {}&\subseteq{}
  \hat{c}(\md_{0}, \md_{0}') \cap c(\seq{\md}{1}{N}, \seq{\md'}{1}{N}, \seq{u}{1}{N-1}). 
\end{aligned}
\end{equation}

\begin{lem} \label{lem:inclusion_conflict_1}
Let $\cat{\md \bar{\md}}, \cat{\md' \bar{\md}'} \in \W^{N+1}$ 
denote the concatenation of the paths $\md, \md' \in \W^{N}$ and 
the modes $\bar{\md}, \bar{\md}' \in \W$, respectively. 
It holds that 
$$
    c(\cat{\md \bar{\md}}, \cat{\md' \bar{\md}'}, u) \subseteq c(\md, \md', \seq{u}{0}{N-2})
$$
for all $u = (u_0, \dots, u_{N-1})\in \Re^{\na N}$.
\end{lem}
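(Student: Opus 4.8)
The plan is to exploit the causal structure of \eqref{eq:system}: the outputs produced over time steps $0$ through $N-1$ by a path of length $N+1$ depend only on its first $N$ modes and on the controls $u_0, \dots, u_{N-2}$, whereas the trailing mode $\bar{\md}$ (resp. $\bar{\md}'$) and the trailing control $u_{N-1}$ affect only the final output. Restricting the output-matching condition that defines the left-hand set to the first $N$ output blocks therefore reproduces exactly the condition defining the right-hand set, and with the \emph{same} witness $x'$.

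Concretely, I would first record the block decomposition of the data matrices obtained by reading off \eqref{eq:measurement-equation} and \eqref{eq:hankel}. Indexing the modes of $\cat{\md\bar{\md}}$ from $0$ to $N$, the top $N$ block rows of $\obs(\cat{\md\bar{\md}})$ (those generating $y_0, \dots, y_{N-1}$) are independent of $\bar{\md}$ and coincide with $\obs(\md)$. Since $u_{N-1}$ enters the state only from time $N$ onward, its block column in $G(\cat{\md\bar{\md}})$ vanishes in these top $N$ rows, while the remaining top-left block equals $G(\md)$. Letting $\Pi$ denote the projection onto the first $N\ny$ output coordinates, this yields the identity $\Pi\,\obsaug(\cat{\md\bar{\md}}, x, u) = \obsaug(\md, x, \seq{u}{0}{N-2})$ for all $x \in \Re^{\ns}$, together with the analogous identity for the primed path.

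The inclusion is then immediate. Take $x \in c(\cat{\md\bar{\md}}, \cat{\md'\bar{\md}'}, u)$; by definition there is an $x' \in \Re^{\ns}$ with $\obsaug(\cat{\md\bar{\md}}, x, u) = \obsaug(\cat{\md'\bar{\md}'}, x', u)$. Applying $\Pi$ to both sides and invoking the two identities above gives $\obsaug(\md, x, \seq{u}{0}{N-2}) = \obsaug(\md', x', \seq{u}{0}{N-2})$, so the same $x'$ witnesses $x \in c(\md, \md', \seq{u}{0}{N-2})$. As $u$ was arbitrary, this holds for all $u \in \Re^{\na N}$.

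The only delicate point, and the step I would carry out most carefully, is the block bookkeeping: verifying that truncating to the first $N$ output blocks leaves $\obs$ unchanged and drops exactly the column of $G$ associated with the discarded control $u_{N-1}$. This is a routine consequence of the causal (block-triangular) structure of $H(\md)$ in \eqref{eq:hankel}, but it is where the index shifts --- length-$(N+1)$ paths paired with length-$N$ controls versus length-$N$ paths with length-$(N-1)$ controls --- must be tracked accurately. A fully index-free alternative would be to mirror the front-peeling inclusion \eqref{eq:subset} in the backward direction, but the direct projection argument is cleaner and avoids introducing the auxiliary set $\hat{c}$.
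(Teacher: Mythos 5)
Your proposal is correct and follows essentially the same route as the paper: both arguments observe that, by the causal block structure of $\obs$ and $G$, the first $N$ output blocks of $\obsaug(\cat{\md\bar{\md}}, x, u)$ coincide with $\obsaug(\md, x, \seq{u}{0}{N-2})$, so restricting the output-matching condition to those blocks (the paper writes this as an equivalence with a two-equation system and drops the final-block equation; you phrase it as applying a projection $\Pi$) yields membership in $c(\md, \md', \seq{u}{0}{N-2})$ with the same witness $x'$. The only cosmetic difference is that you make the block bookkeeping explicit, whereas the paper asserts the decomposition directly from the definition.
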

\begin{proof}
The statement follows directly from the definition: 
$$
\begin{aligned}
      &x \in c(\cat{\md \bar{\md}}, \cat{\md' \bar{\md}'}, u) \subseteq \Re^{\ns}\\
      &{}\Leftrightarrow{}
        \obsaug(\cat{\md \bar{\md}}, x, u) = \obsaug(\cat{\md' \bar{\md}'}, x', u)
        \text{ for some } 
        x' \in \Re^{\ns}\\
        &{}\Leftrightarrow{}
    \begin{cases} 
        \obsaug(\md, x, \seq{u}{0}{N-2}) = \obsaug(\md', x', \seq{u}{0}{N-2})\\
        C_{\bar{\md}} x_{N} = C_{\bar{\md}'} x_{N}',
    \end{cases}
\end{aligned}
$$
with $x_{N}$ the state at time $N$, obtained by applying \eqref{eq:dynamics} from initial condition $x$, under mode sequence $\md$ and controls $\seq{u}{0}{N-2}$. $x_{N}'$ is analogously defined with initial condition $x'$ and mode sequence $\md'$.
On the other hand, $x \in c(\md, \md', \seq{u}{0}{N-2})$ if and only if  
$$  
\obsaug(\md, x, \seq{u}{0}{N-2}) = \obsaug(\md', x', \seq{u}{0}{N-2}) \text{ for some } x' \in \Re^{\ns}.
$$ 
Therefore, if $x \in c(\cat{\md \bar{\md}}, \cat{\md' \bar{\md}'}, u)$, then $x \in c(\md, \md', \seq{u}{0}{N-2})$, as required.
\end{proof}

\begin{lem} \label{lem:inverse-transform-inclusion}
Let $\cat{\bar{\md}\md}, \cat{\bar{\md}'\md'} \in \W^{N+1}$ denote the concatenations 
of $\bar{\md}, \bar{\md}' \in \W$ and $\md, \md' \in \W^{N}$, respectively.
Then,
$$
    c(\cat{\bar{\md}\md},  \cat{\bar{\md}'\md'}, u) \subseteq T_{\bar{\md}, u_0}^{-1}c(\md, \md', \seq{u}{1}{N-1}), 
$$
for any $u = (u_0, \dots, u_{N-1}) \in \Re^{\na N}$, where $T_{\bar{\md},u_0}^{-1}(C)$ denotes the pre-image of a set $C$
under the affine map $T_{\bar{\md},u_0}: x \mapsto A_{\bar{\md}}x + B_{\bar{\md}}u_{0}$.
\end{lem}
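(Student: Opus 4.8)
The plan is to unfold the definition of the conflict set $c(\argdot)$ and to propagate the dynamics \eqref{eq:dynamics} through the single prepended mode, thereby reducing the length-$(N+1)$ matching condition to a length-$N$ condition in the one-step-shifted coordinates. Concretely, I would take an arbitrary $x \in c(\cat{\bar{\md}\md}, \cat{\bar{\md}'\md'}, u)$, construct from it (via $T_{\bar{\md},u_0}$) a point lying in $c(\md, \md', \seq{u}{1}{N-1})$, and conclude that $x$ lies in the corresponding preimage.

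First I would invoke the definition: $x \in c(\cat{\bar{\md}\md}, \cat{\bar{\md}'\md'}, u)$ means there exists a witness $x' \in \Re^{\ns}$ with $\obsaug(\cat{\bar{\md}\md}, x, u) = \obsaug(\cat{\bar{\md}'\md'}, x', u)$. I would then split this augmented output into its head block and its tail, in the same spirit as \eqref{eq:subset}. The zeroth output row yields $C_{\bar{\md}} x = C_{\bar{\md}'} x'$, a constraint I will not even need. The remaining rows are exactly the augmented output of the length-$N$ tail paths, started from the one-step-propagated states
\begin{align*}
  \bar{x}  &\dfn A_{\bar{\md}} x + B_{\bar{\md}} u_0 = T_{\bar{\md}, u_0}(x), \\
  \bar{x}' &\dfn A_{\bar{\md}'} x' + B_{\bar{\md}'} u_0 = T_{\bar{\md}', u_0}(x'),
\end{align*}
and driven by the shifted control $\seq{u}{1}{N-1}$; that is, the tail of the matching condition reads $\obsaug(\md, \bar{x}, \seq{u}{1}{N-1}) = \obsaug(\md', \bar{x}', \seq{u}{1}{N-1})$.

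With this decomposition, $\bar{x}'$ is a witness certifying that $\bar{x} \in c(\md, \md', \seq{u}{1}{N-1})$. Since $\bar{x} = T_{\bar{\md}, u_0}(x)$, this is precisely the assertion $x \in T_{\bar{\md}, u_0}^{-1} c(\md, \md', \seq{u}{1}{N-1})$, which establishes the claimed inclusion.

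There is no genuine obstacle here; the only care required is bookkeeping of the time indices and the affine propagation of the witness. One must confirm that the single witness $x'$ for the concatenated pair maps forward under $T_{\bar{\md}', u_0}$ to a valid witness $\bar{x}'$ for the tail pair, which is immediate from the recursion \eqref{eq:dynamics}. The inclusion is generally strict because the discarded head constraint $C_{\bar{\md}} x = C_{\bar{\md}'} x'$ is not imposed on the preimage; this forward-direction argument parallels the ones already used for \eqref{eq:subset} and \Cref{lem:inclusion_conflict_1}.
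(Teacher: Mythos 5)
Your proof is correct and takes essentially the same route as the paper's: both unfold the definition of $c(\argdot)$, split off the zeroth output row $C_{\bar{\md}} x = C_{\bar{\md}'} x'$, propagate the witness one step through the affine maps $T_{\bar{\md},u_0}$, $T_{\bar{\md}',u_0}$, and then discard the head constraint together with the restriction that $\bar{x}'$ lie in the range of $T_{\bar{\md}',u_0}$, which is exactly what yields the preimage. The only difference is presentational: the paper writes this as a chain of set inclusions, while you argue element-wise.
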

\begin{proof}
This again follows directly from the definition:
$$
\begin{aligned}
&c(\cat{\bar{\md} \md}, \cat{\bar{\md}' \md'}, u)\\
    &{}={}
        \left \{x \in \Re^{\ns} \sep
        \begin{matrix}
            C_{\bar{\md}} x = C_{\bar{\md}'} x', \; x' \in \Re^{\ns},\\
            \obsaug(\md, \bar{x}, \seq{u}{1}{N-1}) = \obsaug(\md', \bar{x}', \seq{u}{1}{N-1}),\\
            \bar{x} = T_{\bar{\md}, u_0}(x), \quad \bar{x}' = T_{\bar{\md}', u_{0}}(x')
        \end{matrix}
        \right \}\\
    &{}\subseteq{}
        \left \{x \in \Re^{\ns} \sep
        \begin{matrix}
            \obsaug(\md, \bar{x}, \seq{u}{1}{N-1}) = \obsaug(\md', \bar{x}', \seq{u}{1}{N-1})\\
            \bar{x} =  T_{\bar{\md}, u_0}(x), \, \bar{x}' \in \col(A_{\bar{\md}'}){+}B_{\bar{\md}'}u_0{+}p_{\bar{\md}'}
        \end{matrix}
        \right \} \\ 
    &{}\subseteq{}
        \left \{x \in \Re^{\ns} \sep
        \begin{matrix}
            \obsaug(\md, \bar{x}, \seq{u}{1}{N-1}) = \obsaug(\md', \bar{x}', \seq{u}{1}{N-1})\\
            \bar{x} = T_{\bar{\md}, u_0}(x), \quad \bar{x}' \in \Re^{\ns}
        \end{matrix}
        \right \} \\ 
    &{}={}
        T_{\bar{\md}, u_0}^{-1}c(\md, \md', \seq{u}{1}{N-1}).
\end{aligned}
$$
\end{proof}

\subsection{Deferred proofs}

\begin{appendixproof}{lem:incremental-update}
    Let $x_0, \dots, x_{N+1}$ denote the true state trajectory 
    satisfying dynamics \eqref{eq:dynamics} under the control sequence 
    $\seq{u}{0}{N}$, true mode sequence $\seq{\md^{\star}}{0}{N+1}$, yielding measurements $\seq{y}{0}{N+1}$. 
    Consider $\md' = \cat{\md \bar{\md}} \in \Theta(\seq{y}{1}{N+1}, \seq{u}{1}{N}) \subseteq \W^{N+1}$ as defined in the statement. By construction, we have that 
    \begin{inlinelist}
        \item $\seq{y}{1}{N+1} = \obsaug(\md', x_1, \seq{u}{1}{N})$; and 
        \item $x_1 = A_{\md^\star_0} x_0 + B_{\md^\star_0}u_0$, $y_0 = C_{\md_0^\star} x_0$. 
    \end{inlinelist}
    Hence, taking $\underline \md = \md^\star_0$ and writing the measurement equation \eqref{eq:measurement-equation} explicitly, it is immediate that $\seq{y}{0}{N+1} \in \obsaug(\cat{\underline \md \md \bar{\md}}, x_0, \seq{u}{0}{N})$,
    and thus $\cat{\underline \md \md} \in \Theta(\seq{y}{0}{N}, \seq{u}{0}{N-1})$. The second part of the statement follows from a very similar argument.
\end{appendixproof}

\begin{appendixproof}{prop:extension-of-MO-index}
    Let $\md \neq \md'$ be two different paths of length $N'+ 1$ with
    $N' \dfn N-\alpha-\omega$
    and let $\underline{\lambda}, \underline{\lambda}' \in \W^{\alpha}$
    be any two paths of length $\alpha$ and $\bar{\lambda}, \bar{\lambda'} \in \W^{\omega}$
    be any two paths of length $\omega$.
    Let $\pi \dfn \cat{\underline{\lambda} \md \bar{\lambda}}, \pi'\dfn \cat{\underline{\lambda'} \md' \bar{\lambda'}} \in \W^{N+1}$ denote the concatenations of these paths.
    It suffices to show that these paths are mutually discernable.
    Note that since $\md \neq \md'$, at least one the following cases applies.
    \begin{enumerate}[leftmargin=0cm,itemindent=.5cm,labelwidth=\itemindent ,labelsep=0pt, align=left, label=\textbf{Case \arabic* }]
    \item
    ($\seq{\md}{0}{N'-1} \neq \seq{\md}{0}{N'-1}'$): $\pi$ and $\pi'$
    can be written as
    $\cat{\underline{\lambda} \seq{\md}{0}{N'-1} \mu}$
    and
    $\cat{\underline{\lambda'} \seq{\md'}{0}{N'-1}\mu'}$,
    respectively,
    with 
    $\mu \dfn \md_{N'} \bar{\lambda}$
    and
    $\mu' \dfn \md'_{N'} \bar{\lambda'}$.
    By assumption, we have that
    $$
        c\left(\cat{\underline{\lambda}\seq{\md}{0}{N'-1} \seq{\mu}{0}{\omega-1}}, \cat{\underline{\lambda'}\seq{\md}{0}{N'-1}' \seq{\mu'}{0}{\omega-1}}, u\right) = \emptyset,
    $$
    for every $u \in U = \Re^{\na (N-1)} \setminus \mathcal{K}$ with $\mathcal{K}$ some null set. Furthermore, by \Cref{lem:inclusion_conflict_1}, 
    $$
        \begin{aligned}
          c(\pi, \pi', u) &=  c\left(\cat{\underline{\lambda}\seq{\md}{0}{N'-1} \mu}, \cat{\underline{\lambda'}\seq{\md}{0}{N'-1}' \mu'}, u\right)  \\ 
          \subseteq c\Big(
              \cat{\underline{\lambda}&\seq{\md}{0}{N'-1} \seq{\mu}{0}{\omega-1}}, \cat{\underline{\lambda}\seq{\md}{0}{N'-1}' \seq{\mu'}{0}{\omega-1}}, \seq{u}{0}{N-1}
              \Big)
        \end{aligned}
    $$
    for all $u \in \Re^{\na N}$, we conclude that $c(\pi, \pi', u) = \emptyset$ for every $u \in U' \dfn U \times \Re^{\na}$. By basic set-theoretic arguments, $\Re^{N(\na)} \setminus U' = \mathcal{K} \times \Re^{\na}$ and is therefore null. We conclude that the paths $\pi$ and $\pi'$ are discernible.
    \item
    ($\seq{\md}{1}{N'} \neq \seq{\md}{1}{N'}'$):
    We can write
    $\seq{\pi}{1}{N} = \cat{\mu \seq{\md}{1}{N'} \bar{\lambda}}$,
    $\seq{\pi'}{1}{N} = \cat{\mu' \seq{\md'}{1}{N'} \bar{\lambda'}}$,
    with
    $\mu \dfn \cat{\seq{\underline{\lambda}}{1}{\alpha} \md_{0}}$
    and
    $\mu' \dfn \cat{\seq{\underline{\lambda'}}{1}{\alpha} \md_{0}'}$.
    Since $\mu, \mu' \in \W^{\alpha}$,
    $\seq{\md}{1}{N'} \neq \seq{\md}{1}{N'} \in \W^{N'}$
    and 
    $\bar{\lambda}, \bar{\lambda'} \in \W^{\omega}$, 
    we have that 
    $$
        c(\seq{\pi}{1}{N}, \seq{\pi'}{1}{N}, u) = \emptyset
    $$
    for almost every $u \in \Re^{\na (N-1)}$.
    Moreover, by \Cref{lem:inverse-transform-inclusion},
    $$
        \begin{aligned}
          c(\pi, \pi', u)
          {}&\subseteq{}
            T_{\pi_0,u_0}^{-1}(c(\seq{\pi}{1}{N}, \seq{\pi'}{1}{N}, \seq{u}{1}{N-1}), 
        \end{aligned}
    $$
    thus $c(\pi, \pi', u) = \emptyset$ for almost every $u \in \Re^{\na N}$, 
    as required.
    \end{enumerate}
    Since $\md \neq \md'$ were arbitrary, this concludes the proof.
\end{appendixproof}
\subsection{LMI-based controller design} \label{sec:LMI}
For completeness, we recall the control design matrix introduced in \cite[Thm. 3]{shu_StaticOutputfeedbackStabilization_2010} here. 
\begin{equation} \label{eq:control-design-matrix}
    O_{i}(q)
    {}\dfn{}\smallmat{O_{11,i}& * & * & * \\ 
    O_{21,i} & O_{22,i} & * & * \\
    O_{31,i} & O_{32,i} & O_{33,i} & * \\
    O_{41,i} & O_{42,i} & O_{43,i} & O_{44,i}}, 
\end{equation}
with 
\[
   \begin{aligned}
     O_{11,i} &= H_{1,i} A_i + \trans{(H_{1,i} A_i)} - V_{1,i}\\
     & \quad + 2 \alpha \trans{M_i} Q_4 M_i - 2 \alpha (\trans{(\trans{M_i} L C_i)}  + \trans{M_i} L C_i)\\
     O_{21,i} &= H_{2,i} A_{i} + \trans{(H_{1,i} B_i)} + 2 \alpha L C_i\\
     O_{22,i} &= H_{2,i} B_i + \trans{(H_{2,i} B_i)} - 2 \alpha Q_{4}\\
     O_{31,i} &= G_{1,i} A_i + \trans{H_{1,i}}\\
     O_{32,i} &= G_{1,i} B_i - \trans{H_{2,i}}\\
     O_{33,i} &= \tsum_{j\in\W} q_j V_{1,j} - G_{1,i} - \trans{G_{1,i}}\\
     O_{41,i} &= G_{2,i} A_i + L C_i\\
     O_{42,i} &= G_{2,i} B_i + Q_{4}\\
     O_{43,i} &= -G_{2,i} + \tsum_{j\in\W} q_j V_{2,j}\\
     O_{44,i} &= \tsum_{j \in \W} q_j V_{4,j} - 2 Q_4.
   \end{aligned}
\]
\end{appendix}
\fi
\end{document}